\documentclass[11pt]{amsart}
\usepackage{graphicx}
\usepackage{amsfonts, amsmath, amsfonts, amssymb}
% ----------------------------------------------------------------
\vfuzz2pt % Don't report over-full v-boxes if over-edge is small
\hfuzz2pt % Don't report over-full h-boxes if over-edge is small
\linespread{1.25}
% THEOREMS -------------------------------------------------------
\newtheorem{thm}{Theorem}[section]

\newtheorem{lem}[thm]{Lemma}

\theoremstyle{definition}
\newtheorem{defn}[thm]{Definition}
\theoremstyle{remark}
\newtheorem{rem}[thm]{Remark}
% \numberwithin{equation}{section} \theoremstyle{quest}

% \numberwithin{equation}{section} \theoremstyle{prob}
% \newtheorem{prob}[]{Problem}
% \numberwithin{equation}{section}
% \theoremstyle{answer}
% \newtheorem{answer}[]{Answer}
% \numberwithin{equation}{section}

\newenvironment{prf1}{\noindent{\it Proof 1}}{\\ \hspace*{\fill}$\Box$ \par}
\newenvironment{prf2}{\noindent{\it Proof 2}}{\\ \hspace*{\fill}$\Box$ \par}

% MATH -----------------------------------------------------------

%\include{bible1}
% ----------------------------------------------------------------
\begin{document}

\title[MetricRicci]{Metric Ricci curvature for $PL$ manifolds}

\author[Emil Saucan]{Emil Saucan}

\address{Department of Mathematics, Technion, Haifa, Israel}

\email{semil@tx.technion.ac.il}

\thanks{Research supported %by the Israel Science Foundation and
by European Research Council under the European Community's Seventh Framework Programme (FP7/2007-2013) / ERC grant agreement n${\rm ^o}$ [203134].}
\subjclass{51K10, 53C21, 65D18}%
\keywords{$PL$ manifold, Ricci curvature, Bonnet-Myers Theorem, Wald-Berestovskii curvature}%, thick triangulation}%
\date{\today}%\date{25.7.2006.}
%\dedicatory{}

\maketitle

%%%%%%%%%%%%%%%%%%%%%%%%%%%%%%%%%%%%%%%%%%%%%%%%%%%%%%%%%%&

\begin{abstract}
We introduce a metric notion of Ricci curvature for $PL$ manifolds and study its convergence properties. We also prove a fitting version of the Bonnet-Myers Theorem, for surfaces as well as for a large class of higher dimensional manifolds.
\end{abstract}

%------------------------------------------------------

\section{Introduction}%/The problem}

Recent years have seen a great ``revival'' of Ricci curvature, due mainly to Perelman's %revolutionary %(ground breaking)
celebrated work on the the Ricci flow and the Poincare conjecture \cite{Per}, \cite{Per1}, but also to its extension to a far larger class of geometric objects, than merely smooth (3-)manifolds (see \cite{Vi} and the bibliography therein). In consequence, Ricci curvature has become an object of interest and study in Graphics and Imaging. The approaches range from implementations of the Combinatorial Ricci curvature of Chow and Luo \cite{CL} -- see, e.g. \cite{GY}, through classical approximation methods of smooth differential operators \cite{As}, \cite{FAF}, to discrete, purely combinatorial methods %tools
\cite{SAWZ1}.

We have addressed the problem of Ricci curvature of $PL$ surfaces and higher dimensional $PL$ (piecewise flat) manifolds, from a metric point of view, both as a tool in studying the Combinatorial Ricci flow on surfaces \cite{Sa11}, and, in a more general context, in the approximation in secant of curvature measures of manifolds \cite{Sa11a}. Computational applications aside, these and related problems -- see \cite{Ru}, \cite{Bernig} -- make the study of a robust notion of Ricci curvature for $PL$ spaces a subject of thriving interest in the Geometry and Topology of (mainly 3-dimensional) manifolds.

This article represents a continuation
of (or, in a sense, an appendix to) both papers above: In the first part of this paper we address the main problem, namely that of defining a computable, discrete metric Ricci curvature for $PL$ (piecewise flat) manifolds. Here we use methods similar to those in \cite{Sa11a}, to address a problem -- or, rather, a particular case -- considered therein.
We also investigate the convergence properties of this newly introduced curvature.
In the second (and last) part we address a rather more theoretical question, namely if the newly introduced version of Ricci curvature satisfies -- as indeed expected from a proper (``correct'') notion of Ricci curvature -- a Bonnet-Myers type of theorem. (The methods in this part are, partially, those developed in \cite{Sa11}.)

A note to the reader before we proceed to the main part of our paper: Our default source for geometric differential definitions and results is \cite{Be}, and, if no other source is specified the reader should consult, if needed, this encyclopedic source. % textbook(compendium, text).
Also, as background material for $PL$ topology we refer the reader to \cite{Hu}.

\section{%Metric Ricci Curvature of(for) $PL$ Manifolds -
Definition and Convergence}

     To begin with, we have to be able to properly define Ricci curvature for $PL$ manifolds. This is indeed possible, not just for $PL$ manifolds but also for polyhedral ones -- and in a quite natural manner -- combining ideas of Stone \cite{St1}, \cite{St2} and metric curvatures.
      For this one regards %uses the definition of
      Ricci curvature as the mean of sectional curvatures:
      %
%      \begin{equation}
%       {\bf v}\cdot Ricci({\bf v}) = \frac{n - 1}{vol\big(\mathbf{S}^{n-2}\big)}\int_{{\bf w} \in T_p(M^n),\; {\bf w}
%\perp {\bf v}}\!\!\!\!\!K(<{\bf v},{\bf w}>)\,,
%      \end{equation}
%%
%where $<{\bf v},{\bf w}>$ denote the plane spanned by  ${\bf v}$ and
%${\bf w}$, i.e. Ricci curvature represents an average of sectional
%curvatures.
%
%Alternatively -- and more simply - we can use the following expression for the Ricci curvature:
%%
\begin{equation}
{\rm Ric}(e_1) = {\rm Ric}(e_1,e_1) = \sum_{i=2}^nK(e_1,e_i)\,,
\end{equation}
for any orthonormal basis $\{e_1,\cdots,e_n\}$, and where $K(e_1,e_j)$ denotes the sectional curvature of the 2-sections containing the directions $e_1$.

%      {\tt Use $sd$-quadruples for Wald curv. when using local version of Ricci in simplicial complexes}

      %
First, one has, of course, to be able to define (variational) {\it Jacobi fields}. %(see, e.g. \cite{Be}).
This is where we rely upon Stones's work. However, we do not need the whole force of this technical apparatus, only to determine the relevant two sections and, of course, to decide what a direction at a vertex of a $PL$ manifold is.

In fact, in Stone's work, combinatorial Ricci curvature is defined both for the given simplicial complex $\mathcal{T}$, and also for its {\it dual complex} $\mathcal{T}^\ast$. In the later case, cells -- playing here the role of the planes in the classical setting of which sectional curvatures are to be averaged -- are considered. However, his approach for the given complex, where one computes the Ricci curvature ${\rm Ric}(\sigma,\tau_1-\tau_2)$ of an $n$-simplex $\sigma$ in the direction of two adjacent $(n-1)$-faces, $\tau_1,\tau_2$, is not natural in a geometric context (even if useful in his purely combinatorial one), except for the 2-dimensional case, where it coincides with the notion of Ricci curvature in a direction (i.e., in this case, an edge -- see also Remark \ref{rem:defect} below).
Passing to the dual complex will not restrict us, since $(\mathcal{T}^\ast)^\ast = \mathcal{T}$ and, moreover -- and more importantly -- considering {\it thick} triangulations enables us to compute the more natural metric curvature for the dual complex and use the fact that the dual of a thick triangulation is thick, as we shall %present in
detail below. Working only with thick triangulations does not restrict us, however, at least in dimension $\leq 4$, since any triangulation admits a ``thickening'' -- see \cite{Sa05}.\footnote{This holds, as already mentioned, for any $PL$ manifold of dimension $\leq 4$, and in all dimensions for smoothable $PL$ manifolds, as well for any manifold of class $\geq \mathcal{C}^1$. Since the proof of the main result of Section 3, regarding manifolds of dimension higher than 3, holds only for manifolds admitting smoothings, restricting ourselves only to such manifolds does not represent anyhow a great hindrance.}

First, let us recall the definition of thick triangulations:

\begin{defn} Let $\tau \subset \mathbb{R}^n$ ; $0 \leq k \leq n$ be a $k$-dimensional simplex.
The {\it thickness} (or {\it fatness}) $\varphi$ of $\tau$ is defined as being:
\begin{equation} \label{eq:fat-Munkres}
\varphi(\tau) = \frac{{\rm dist}(b,\partial\sigma)}{{\rm diam}\,\sigma}\,,
\end{equation}
where $b$ denotes the {\it barycenter} of $\sigma$ and $\partial\sigma$ represents the standard notation for the {\it boundary} of $\sigma$ (i.e the union of the $(n-1)$-dimensional faces of $\sigma$).

A simplex $\tau$ is $\varphi_0${\it-thick}, for some $\varphi_0 > 0$, if $\varphi(\tau) \geq \varphi_0$. A triangulation (of a submanifold of $\mathbb{R}^n$) $\mathcal{T} = \{ \sigma_i \}_{i\in \bf I}$ is
$\varphi_0${\it-thick} if all its simplices are $\varphi_0$-thick. A
triangulation $\mathcal{T} = \{ \sigma_i \}_{i\in \bf I }$ is {\it
thick} if there exists $\varphi_0 \geq 0$ such that all its
simplices are $\varphi_0${\it-thick}.
\end{defn}

\begin{rem}
This is Munkres' definition \cite{Mun}. For a discussion of other, equivalent definitions, their mutual interplay and relationship with certain aspects of Differential Geometry (mainly curvature approximation) see \cite{Sa11a}. Note that this definition holds for more general simplices, not necessarily Euclidean ones.
\end{rem}

%
%\\
%\\
%----------- $\downarrow \downarrow \downarrow$  -- DE MUTAT ?! -- $\downarrow \downarrow \downarrow$ -------------
%\\
%\\
%The connection between the Ricci curvatures of $\mathcal{T}$ and $\mathcal{T}^\ast$  is given by
%
%\begin{equation}
%\lim_{{\rm mesh}(\mathcal{T}) \rightarrow 0}{\rm Ric}(\sigma) = \lim_{{\rm mesh}(\mathcal{T}^\ast) \rightarrow 0}{\rm Ric}^*(\sigma^\ast)\,,
%\end{equation}
%\\
%
%where $\sigma \in \mathcal{T}, \sigma^\ast \in \mathcal{T}^\ast$.???????????????
%
%\begin{rem}
%This result is somewhat vague, to be sure(admittedly). However, (to our defense we can only underline the fact that) the precise constant $C$ is hard to determine. The thickness condition, that ensures a metric ``quasi-regularity'' of the triangulation, supplies us only with weak estimates. To obtain stronger ones, one should be able to control the regularity of the combinatoric structure, as well. (This is evident, but it will become clearer from the following(material, ) It should be noted in this context that, at least in Graphics, mesh improvement techniques allow us to consider such ``combinatorial almost regular'' triangulations.
%\end{rem}
%
%----------- $\uparrow \uparrow \uparrow$  -- DE MUTAT ?! -- $\uparrow \uparrow \uparrow$ -------------
%\\
%\\

To be able to define and estimate the Ricci curvature of $\mathcal{T}$ and $\mathcal{T^*}$ and the connection between them, we have to make appeal in an essential manner to the fatness of the given complex.
We begin by noting -- by
%This is easy to see, %for planar triangulations (maps),
keeping in mind formula (\ref{eq:fat-Munkres}) -- that, since the length of the edge $l_{ij}^*$, dual to the edge $l_{ij}$ common to the faces $f_i, f_j$ equals $r_i + r_j$, the first barycentric subdivision\footnote{needed in the construction of the dual complex -- see e.g. \cite{Hu}} of a thick triangulation is thick.\footnote{For planar triangulations, and also for higher dimensional complexes (embedded in some $\mathbb{R}^N$), one can realize the dual complex (also in $\mathbb{R}^N$) by constructing the dual edges $l_{ij}^*$ orthogonal to the middle of the respective $l_{ij}$-s. To show the thickness of the dual simplices, one has also to make appeal to the characterization of thickness in terms of dihedral angles (Conditions (1.15) of \cite{cms}).}
%(For the case of regular polytopes an explicit computation may be found in \cite{Be1}, Lemma 2.6.5.) {\tiny \bf -- Trebuie?!} }
%
Since the definition of thickness  also makes sense for for general cells (see \cite{SAZ}, Definition 3.1), we can summarize the discussion above as

\begin{lem}
The dual complex of a thick (simplicial) complex is thick.
\end{lem}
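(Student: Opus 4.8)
The plan is to reduce the thickness of the dual cells to the thickness of the first barycentric subdivision $\mathcal{T}'$ of $\mathcal{T}$, which, by the discussion preceding the statement, is itself thick. First I would recall the precise combinatorial description of the dual complex: the dual cell $\sigma^\ast$ of a simplex $\sigma \in \mathcal{T}$ is the union, over all flags $\sigma = \sigma_0 \subsetneq \sigma_1 \subsetneq \cdots \subsetneq \sigma_m$ of simplices of $\mathcal{T}$ containing $\sigma$, of the corresponding barycentric simplices $[\hat\sigma_0, \ldots, \hat\sigma_m]$ of $\mathcal{T}'$. Thus every dual cell is a polytope tiled by simplices of $\mathcal{T}'$, glued along shared barycentric faces, and it is precisely to such objects that the generalized notion of thickness for cells of \cite{SAZ}, Definition 3.1, applies.

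The first step is to fix a uniform lower bound $\varphi_1 = \varphi_1(\varphi_0, n)$ on the thickness of the simplices of $\mathcal{T}'$. This uses the fact that barycentric subdivision distorts diameters and the distance-to-boundary appearing in (\ref{eq:fat-Munkres}) only by factors controlled by $n$ and the starting thickness $\varphi_0$; concretely, one tracks, as indicated above, that the dual edge length $l_{ij}^\ast$ equals $r_i + r_j$ and compares this quantity against the diameters of the incident barycentric simplices, so that no simplex of $\mathcal{T}'$ degenerates.

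The second, and main, step is to pass from thickness of the simplices of $\mathcal{T}'$ to thickness of the dual \emph{cells} assembled from them. Here I would avoid arguing directly with the ratio in (\ref{eq:fat-Munkres}), which is awkward for general polytopes, and instead invoke the equivalent characterization of thickness in terms of dihedral angles (Conditions (1.15) of \cite{cms}): a cell is thick if and only if all its dihedral angles, in every dimension, are bounded away from $0$ and $\pi$ by a constant depending only on $\varphi_1$ and $n$. In the embedded picture, where each dual edge $l_{ij}^\ast$ is erected orthogonally to the midpoint of the primal edge $l_{ij}$, the dihedral angles of a dual cell are computed from those of its constituent barycentric simplices, which are uniformly bounded by the first step. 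The hard part will be controlling these angles at the seams where several barycentric simplices meet to form a single dual face: one must verify that the orthogonality construction keeps the reassembled dihedral angles bounded away from the degenerate values uniformly over the whole complex, and that no dual cell collapses in any intermediate dimension. Once this uniform dihedral-angle bound is secured, the characterization delivers a common thickness constant $\varphi_0^\ast = \varphi_0^\ast(\varphi_0, n) > 0$ for all cells of $\mathcal{T}^\ast$, which is exactly the assertion of the lemma.
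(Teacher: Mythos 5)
Your proposal follows essentially the same route as the paper, which itself only sketches the argument in the discussion preceding the lemma: establish thickness of the first barycentric subdivision via the observation that $l_{ij}^* = r_i + r_j$, then pass to the dual cells using the dihedral-angle characterization of thickness (Conditions (1.15) of \cite{cms}) together with the extension of the thickness definition to general cells (\cite{SAZ}, Definition 3.1). Your write-up is in fact more explicit than the paper's about the flag decomposition of dual cells and about where the genuine work lies (the uniform angle bounds at the seams), but the skeleton and the key ingredients are identical.
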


%\begin{rem}
%A different (and more detailed, as far as actual formulas are concerned) approach to the fatness of the dual complex is given in \cite{Pe}.
% %(see also {\sf Geometric Shannon??}).
%\end{rem}
%%\marginpar{\tiny Aici sau mai sus?}

Moreover, we have the following (common) Gromov-Hausdorff convergence property:

%$\lesseqgtr\gtreqless\lesseqqgtr\gtreqqless$

\begin{lem} \label{ref:T,T*}
Let $\mathcal{T},\mathcal{T^*}$ be as above. Then
\begin{equation}
\lim_{\delta(\mathcal{T}) \rightarrow  0}(\mathcal{T}) = \lim_{\delta(\mathcal{T}) \rightarrow 0}(\mathcal{T^*})\,,
\end{equation}
where $\delta(\mathcal{T}), \delta(\mathcal{T^*})$ denote the mesh of $\mathcal{T},\mathcal{T^*}$, respectively.
\end{lem}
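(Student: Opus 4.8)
The plan is to show that both $\mathcal{T}$ and $\mathcal{T}^*$ converge, in the Gromov--Hausdorff sense, to the same underlying manifold $M = |\mathcal{T}|$, and then to invoke the uniqueness (up to isometry) of Gromov--Hausdorff limits. The fact that $\mathcal{T} \To M$ as $\delta(\mathcal{T}) \To 0$ is the standard secant-approximation statement established in \cite{Sa11a}, so the real content here is to transfer this convergence to the dual complex $\mathcal{T}^*$.

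First I would control the mesh of the dual complex in terms of that of $\mathcal{T}$. Since each vertex of $\mathcal{T}^*$ is the barycenter of a simplex of $\mathcal{T}$, and the dual edge $l_{ij}^*$ has length $r_i + r_j$, bounded above by the diameters of the incident faces, one obtains at once $\delta(\mathcal{T}^*) \leq C\,\delta(\mathcal{T})$ for an absolute constant $C$; in particular $\delta(\mathcal{T}^*) \To 0$. Combined with the previous Lemma --- the dual of a thick complex is thick --- this exhibits $\mathcal{T}^*$ as a \emph{thick} triangulation of $M$ whose mesh tends to zero, and the same secant-approximation result then yields $\mathcal{T}^* \To M$, whence the desired equality of the two limits.

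Equivalently, and more directly, I would estimate the Gromov--Hausdorff distance $d_{GH}(\mathcal{T}, \mathcal{T}^*)$ by exhibiting an explicit correspondence: pair each point of a simplex $\sigma$ of $\mathcal{T}$ with a point of a dual cell meeting $\sigma$ (for instance, sending $\sigma$ to its barycenter, which is a vertex of $\mathcal{T}^*$). Because every cell of either complex has diameter at most a fixed multiple of $\delta(\mathcal{T})$ --- again by the mesh bound above --- this correspondence has distortion $O(\delta(\mathcal{T}))$, and so $d_{GH}(\mathcal{T}, \mathcal{T}^*) \To 0$ directly.

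The main obstacle is not the point-set (Hausdorff) closeness, which is immediate from the mesh bounds, but the comparison of the \emph{intrinsic} path metrics that make $\mathcal{T}$ and $\mathcal{T}^*$ length spaces rather than mere subsets. Here thickness is indispensable: the fatness lower bound $\varphi_0$ prevents the piecewise-flat metric on $\mathcal{T}^*$ from degenerating, guaranteeing that a shortest path in one complex is approximated, up to a factor $1 + o(1)$, by a path in the other of comparable length. Establishing this uniform bi-Lipschitz control of the two intrinsic metrics is the technical heart of the argument; once it is in place, the equality of the two limits follows from the uniqueness of the Gromov--Hausdorff limit.
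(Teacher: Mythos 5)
Your argument is correct and follows exactly the line the paper itself takes: the paper offers no formal proof of this lemma, relying instead on the surrounding discussion --- the dual edge length $l_{ij}^* = r_i + r_j$ controls $\delta(\mathcal{T}^*)$ in terms of $\delta(\mathcal{T})$, the dual of a thick complex is thick, and (as the remark immediately following the lemma stresses) thickness is what prevents metric degeneracy and makes the uniform edge-length estimates, hence the convergence of the dual meshes, possible. Your write-up simply makes explicit (via the barycenter correspondence and the intrinsic-metric comparison) what the paper leaves implicit, so it is, if anything, more complete than the source.
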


\begin{rem}
It is important to stress here the crucial role of the thickness of the triangulation, as far as geometry is concerned: Thickness ensures, by its definition, the fact that no degeneracy of the simplices occurs, hence no collapse and degeneracy of the metric can take place.
Moreover, %Reciprocally,
in its absence no uniform estimates for the edge lengths can be made, hence convergence of (dual) meshes and, as we shall see shortly, of their metric Ricci curvatures, can no be guaranteed.
\end{rem}

Returning to the definition of Ricci curvature for simplicial complexes: Given a vertex $v_0$, in the dual of a $n$ dimensional simplicial complex, a {\it direction} at $v_0$ is just an oriented edge $e_1 = v_0v_1$. Since, %(by \cite{St1}?),
there exist precisely $n$ 2-cells, $\mathfrak{c}_1,\ldots,\mathfrak{c}_{n}$\,, having $e_1$ as an edge and, moreover, these cells form part of $n$ relevant variational (Jacobi) fields, the Ricci curvature at the vertex $v$, in the direction $e_1$ is simply
\begin{equation} \label{eq:RicciCell}
{\rm Ric}(v) = \sum_{i=1}^nK(\mathfrak{c}_i)\,.
\end{equation}

\begin{rem}
Observe that the index ``$i$'' in the definition (\ref{eq:RicciCell}) above runs from $1$, and not from $2$, as expected judging from the classical (smooth) setting.  This is due to the fact that we defined Ricci curvature by passing to the dual complex, with its simple but demanding (so to say) combinatorics. For the implications of this fact, see Theorem \ref{thm:ComparisonThm} and Remark \ref{rem:ComparisonThm} below.
\end{rem}

\begin{rem}
Note that we followed \cite{St1} only in determining the variational fields, but not in his definition of Ricci curvature. Indeed, he considers a direction at a vertex $v_0$ to be the union of two edges $e_1,e_2$ in the dual complex, where $e_1 = (v_0,v_1), e_2 = (v_1,v_2)$ and the direction is determined by the lexicographical order. Then (according to \cite{St1}) the relevant variational field are given by the $2n$ distinct 2-cells $\mathfrak{c}_1,\ldots,\mathfrak{c}_{2n}$, containing the edges $e_1$ and $e_2$, $2n-1$ of them containing one, but not both of them. Hence, the Ricci curvature at $v$ in the direction $e_1e_2$ is to be taken as the total defect of these $2n-1$ cells. This approach is necessary
in the combinatorial case. However, it is more difficult than our approach and it would produce unnecessary complications in determining the relevant analogues of the $(n-1)$ 2-sections of the classical, smooth case. Moreover, it is quite possible that, in any practical implementation, the advantages obtained by considering larger variational fields would be countermined by ``noise'' added by considering such order 2 (or larger) neigbourhoods of the given vertex. However, computing Ricci curvature according to this scheme is still possible, using our metric approach (but see also the following Remark \ref{rem:defect}).
\end{rem}

\begin{rem} \label{rem:defect}
It is still possible to compute Ricci curvature according, more-or-less, to Stone's ideas, at least for the 2-dimensional case. Indeed, according to \cite{St2}
\begin{equation}
{\rm Ric}(\sigma,\tau_1-\tau_2) = 8n - \sum_{j=1}^{2n-1}\{N(\beta_j)\;|\; \beta_j < \tau_1\; {\rm or}\; \beta_j < \tau_2; {\dim \beta_j} = n-2\}\,
\end{equation}
where $N(\beta_j)$ denotes the number of $n$-simplices $\alpha$, such that $\beta_j < \alpha$.

This definition of Ricci curvature is a {\it combinatorial defect} one\footnote{presumably inspired by the classical definition of Gauss curvature as the angular defect at a vertex --- see, e.g. [HC-V].}. This is evident from its expression, but made more transparent by the 2-dimensional case: Indeed, in this case, the simplices $\beta_j$ are 0-dimensional, i.e. vertices, and $N(\beta_j)$ is just the number of 2-simplices having $\beta_j$ as a common vertex, hence ${\rm Ric}(\sigma,\tau_1-\tau_2)$ represents nothing but the total combinatorial defect at these $2n-1$ vertices.

In consequence, using the approach of the original proof of Hilbert and Cohn-Vossen \cite{HC-V}, (and following methods well established in Graphics, etc.), we can consider, instead of the combinatorial defect, the angular defect of the cell $\mathfrak{c}_j$ dual to the vertex $\beta_j$. This, of course, applies both for our way -- as well as Stone's -- of determining a direction.

However, this approach to the  definition of $PL$ Ricci curvature is far less intuitive (and apparently has lesser geometric content, so to speak)
%(if at all!...)
in dimension $\geq 3$. This is the reason why, for our present study, we have made use of the dual complex.
\end{rem}

\begin{rem}
Note that, up to this point, we have not yet defined the sectional curvature $K(\mathfrak{c})$ of a cell $\mathfrak{c}$ (see however the discussion below). Nevertheless, regardless of the specific definition employed, we obtain, quite trivially, the following generalization of the classical curvature bounds of Riemannian geometry (compare also with \cite{Bernig1}, Theorem 1):

\begin{thm}[Comparison theorem] \label{thm:ComparisonThm}
Let $M = M^n_{PL}$ be an $n$-dimensional $PL$ manifold, such that $K_W(M) \geq K_0 > 0$, i.e. $K(\mathfrak{c}) \geq K_0$, for any 2-cell of the dual manifold (cell complex) $M^*$. Then

\begin{equation} \label{eq:comp1}
K_W \lesseqqgtr K_0 \Rightarrow {\rm Ric}_W \lesseqqgtr nK_0\,.
\end{equation}

Moreover

\begin{equation} \label{eq:comp2}
K_W \lesseqqgtr K_0 \Rightarrow {\rm scal}_W \lesseqqgtr n(n+1)K_0\,,
\end{equation}
where ${\rm scal}_W$ denotes the scalar metric curvature of $M$, defined as ${\rm scal}_W(v) = \sum K_W(\mathfrak{c})$, the sum being taken over all the cells of $M^*$ incident to the vertex $v$ of $M^*$.
\end{thm}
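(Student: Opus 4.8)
The plan is to derive both bounds directly from the definitions of ${\rm Ric}_W$ and ${\rm scal}_W$ as sums of sectional curvatures $K(\mathfrak{c})$ of $2$-cells, invoking the hypothesis $K(\mathfrak{c}) \lesseqqgtr K_0$ \emph{termwise} and using the elementary fact that a finite sum of quantities all satisfying the same relation (strict, non-strict, or equality) satisfies the correspondingly scaled relation. The entire content is thus the combinatorial bookkeeping of how many $2$-cells enter each sum; no metric estimate beyond the pointwise hypothesis is required, which is exactly why the statement is qualified as ``quite trivial''.

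For (\ref{eq:comp1}) I would fix a direction at the vertex $v$ of $M^*$, i.e.\ an oriented edge $e_1 = v_0 v_1$, and appeal to the definition (\ref{eq:RicciCell}): there are precisely $n$ $2$-cells $\mathfrak{c}_1,\ldots,\mathfrak{c}_n$ having $e_1$ as an edge, and ${\rm Ric}_W(v) = \sum_{i=1}^n K(\mathfrak{c}_i)$. Since each summand satisfies $K(\mathfrak{c}_i) \lesseqqgtr K_0$, adding the $n$ terms immediately yields ${\rm Ric}_W(v) \lesseqqgtr nK_0$.

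For (\ref{eq:comp2}) I would realize ${\rm scal}_W(v)$ through the summation over all directions at $v$. A vertex $v$ of $M^*$ is dual to an $n$-simplex of $M$, so the edges of $M^*$ emanating from $v$ are dual to its $n+1$ facets; there are therefore $n+1$ directions at $v$, and each of the associated Ricci curvatures is, by the previous step, a sum of $n$ sectional curvatures. Running over all directions produces $n(n+1)$ sectional-curvature terms in total --- each $2$-cell, being dual to a codimension-$2$ face contained in exactly two facets, is counted twice, so the $\tfrac{n(n+1)}{2}$ incident cells contribute with multiplicity $2$ --- every one of them $\lesseqqgtr K_0$. Hence ${\rm scal}_W(v) \lesseqqgtr n(n+1)K_0$.

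I expect no genuine obstacle, since the estimate is purely formal once the curvature $K(\mathfrak{c})$ of a cell has been fixed. The only points deserving care are, first, the combinatorial count yielding the coefficient $n(n+1)$ --- in particular the shift away from the classical $n(n-1)$, which is precisely the reflection of the index running from $1$ rather than from $2$ noted in the Remark following (\ref{eq:RicciCell}) --- and, second, the verification that the trichotomy encoded by $\lesseqqgtr$ is honestly preserved under summation, i.e.\ that the summands cannot mix different relations. This last point is guaranteed by the \emph{uniformity} of the hypothesis: $K(\mathfrak{c}) \lesseqqgtr K_0$ is assumed to hold with one and the same relation for every $2$-cell, so that relation is inherited by the sums defining ${\rm Ric}_W$ and ${\rm scal}_W$.
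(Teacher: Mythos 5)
Your proposal is correct, and it is essentially the argument the paper intends: the theorem is stated there with no proof beyond the phrase ``quite trivially,'' and the content is exactly the termwise summation you carry out, with the coefficients $n$ and $n(n+1)$ coming from the combinatorics of the dual complex ($n$ two-cells per direction, $n+1$ directions per dual vertex). One point you handle that deserves to be made explicit: the theorem's literal definition of ${\rm scal}_W(v)$ as a sum over the cells incident to $v$, each counted \emph{once}, would only give the coefficient $n(n+1)/2$, since an $n$-simplex has $\binom{n+1}{2}$ faces of codimension $2$; the stated coefficient $n(n+1)$ is obtained only under your reading, namely ${\rm scal}_W(v)=\sum_{e}{\rm Ric}_W(e)$ summed over the $n+1$ directions at $v$, so that each $2$-cell enters with multiplicity $2$ (each codimension-$2$ face lying in exactly two facets). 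That is the reading consistent with the classical identity ${\rm scal}=\sum_i {\rm Ric}(e_i)$ and with the paper's own comparison of $n(n+1)$ to the classical $n(n-1)$, so your resolution of the factor of $2$ is the right one.
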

\end{rem}

\begin{rem} \label{eq:comp3}
Note that inequality (\ref{eq:comp2}) can be formulated in the seemingly weaker form:

\begin{equation} \label{eq:comp3}
{\rm Ric}_W \lesseqqgtr nK_0 \Rightarrow {\rm scal}_W \lesseqqgtr n(n+1)K_0\,,
\end{equation}

\end{rem}

\begin{rem} \label{rem:ComparisonThm}
Note that in all the inequalities above, the dimension $n$ appears instead of $n-1$ as in the smooth, Riemannian case (hence, for instance one has in (\ref{eq:comp2}), $n(n+1)K_0$, instead of $n(n-1)K_0$\footnote{but even if $n=3$!...} as in the classical case). This is due to our definition (\ref{eq:RicciCell}) of Ricci (and scalar) curvature, via the dual complex of the given triangulation, hence imposing standard and simple combinatorics, at the price of allowing for only for such weaker bounds.\footnote{without affecting the analogue of the Bonnet-Myers Theorem -- see Section 3 below.}
\end{rem}

To determine -- using solely metric considerations -- the sectional curvatures $K(\mathfrak{c}_i)$ of the cells $\mathfrak{c}_i$, we shall employ the so called ({\it modified}) {\it Wald curvature} $K_W$ ($K_W'$). At this point, we have to remind the reader a number of definitions and results that, unfortunately, are perhaps (at least partly) forgotten. We begin with following basic

\begin{defn}
Let $(M,d)$ be a metric space, and let $Q = \{p_1,...,p_4\} \subset M$, together with the mutual distances:
$d_{ij} = d_{ji} = d(p_i,p_j); \, 1 \leq i,j \leq 4$. The set $Q$ together with the set of distances
$\{d_{ij}\}_{1\leq i,j \leq 4}$ is called a {\it metric quadruple}.
\end{defn}

\begin{rem}
One can define metric quadruples in a somewhat more abstract manner, that is without the aid of the ambient
space: In this approach, a metric quadruple is defined as a $4$ point metric space; i.e. $Q = \big(\{p_1,...,p_4\}, \{d_{ij}\}\big)$, where
the distances $d_{ij}$ verify the axioms for a metric.
\end{rem}

We next introduce some necessary notation: Let $S_{\kappa}$ denote the
complete, simply connected surface of constant Gauss curvature $\kappa$, i.e. $S_{\kappa} \equiv \mathbb{R}^2$,
if $\kappa = 0$; $S_{\kappa} \equiv \mathbb{S}^2_{\sqrt{\kappa}}$\,, if $\kappa
> 0$; and $S_{\kappa} \equiv \mathbb{H}^2_{\sqrt{-\kappa}}$\,, if $\kappa < 0$. Here $S_{\kappa} \equiv
\mathbb{S}^2_{\sqrt{\kappa}}$ denotes the sphere of radius  $R = 1/\sqrt{\kappa}$, and $S_{\kappa} \equiv
\mathbb{H}^2_{\sqrt{-\kappa}}$ stands for the hyperbolic plane of curvature $\sqrt{-\kappa}$, as represented by
the Poincar\'{e} model of the plane disk of radius $R = 1/\sqrt{-\kappa}$\,. Using this notation we can next bring

\begin{defn}
 The {\em embedding curvature} $\kappa(Q)$ of the metric quadruple $Q$ is defined to be the curvature $\kappa$ of the gauge surface
$S_{\kappa}$ into which $Q$ can be isometrically embedded.
\end{defn}

We are now able to bring the definition of {\it Wald curvature} \cite{Wa} (or rather of its modification due to Berestovskii \cite{Ber}):

\begin{defn} \label{def:WBcurv}
Let $(X,d)$ be a metric space. % and consider $x \in U = {\rm in}U \subset X$.
An open set $U \subset X$ is called a {\it region of curvature} $\geq \kappa$ iff any metric quadruple can be isometrically embedded in $S_m$, for some $m \geq k$.\footnote{While is not needed in the remainder of the paper, we mention for the sake of completeness, that a metric space $(X,d)$ is said to have {\it Wald-Berestovskii curvature} $\geq \kappa$ iff for any $x \in X$ is contained in a region $U$ of curvature $\geq \kappa$.}
\end{defn}

\begin{rem}
Evidently, one can consider the Wald-Berestovskii curvature at an accumulation point of a metric space, hence on a smooth surface, by considering limits of the curvatures of (nondegenerate) regions of diameter converging to 0.
\end{rem}

Before we proceed further, let us make a certain modification of the notation, in order to make it more uniform and more familiar to the reader working in classical Differential Geometry as well as in Graphics: Henceforth we shall denote by $K_W$  the Wald curvature of a surface ($PL$ or smooth), by analogy to its classical (Gauss) curvature $K$. (Of course, $K_W(p)$ will denote the Wald curvature of a point on the surface.)

%=================================
%              {\tt Define the Wald and Rinow (Alexandrov) curvatures and prove?!? (after \cite{BM}) the equivalence.}
%              \marginpar{\tiny \bf Conv. results here?!?}

At this point the question that naturally rises is whether it is possible to actually compute Wald curvature and, if possible, in what manner? First of all, the first, basic step is to note that the role of the abstract open sets $U$ in Definition \ref{def:WBcurv} above is naturally played by the cells $\mathfrak{c}_i$. We can state this as a formal definition, for the record:

\begin{defn}
Let $\mathfrak{c}$ be a cell with vertex set $V_{\mathfrak{c}} = \{v_1,\ldots,v_p\}$. The {\it embedding curvature} $K(\mathfrak{c})$ of $\mathfrak{c}$ is defined as:
\begin{equation}
K(\mathfrak{c}) = \min_{1\leq i<j<k<l\leq p }\kappa(v_i,v_j,v_k,v_l)\,.
\end{equation}
\end{defn}

It is certainly worthwhile to note that it is possible to actually compute the Wald curvature of each of these cells, using the following formula for the embedding curvature $\kappa(Q)$ of a metric quadruple $Q$:

%    Amongst the advantages of this approach, we mention:
%              \begin{itemize}
%
%              \item Simple to compute (in fact: actually possible to do so for approximations!)
%
%              We have the following

 \begin{equation} \label{eq:k(Q)}
 \kappa(Q) = \left\{
         \begin{array}{clclcrcr}
           \mbox{0} &  \mbox{if $\Gamma(Q) = 0$\,;} \\
           \mbox{$\kappa,\, \kappa < 0$} & \mbox{if $det({\cosh{\sqrt{-\kappa}\cdot d_{ij}}}) = 0$\,;} \\
           \mbox{$\kappa,\, \kappa > 0$} & \mbox{if $det(\cos{\sqrt{\kappa}\cdot d_{ij}})$ and $\sqrt{\kappa}\cdot d_{ij} \leq
           \pi$}\\
           & \mbox{\,\, and all the principal minors of order $3$ are $\geq 0$;}
         \end{array}
   \right.
\end{equation}
where $d_{ij} = d(p_i,p_j), 1 \leq i,j \leq 4$, and $\Gamma(Q) = \Gamma(p_1,\ldots,p_4)$ denotes, the Cayley-Menger determinant:
\begin{equation} \label{eq:CayleyMenger-nD}
 \Gamma(p_0,\ldots,p_3) = \left| \begin{array}{cccc}
                                            0 & d_{01}^{2} & \cdots & d_{13}^{2} \\
                                            d_{10}^{2} & 0 & \cdots & d_{13}^{2} \\
                                            \vdots & \vdots & \ddots & \vdots \\
                                            d_{30}^{2} & d_{31}^{2} & \cdots & 0
                                      \end{array}
                               \right| \,.
\end{equation}

\begin{rem}
\begin{enumerate}
\item For some first numerical results regarding the application of these formulas in a practical context, see \cite{Sa04}, \cite{SA}. However, it should be noted that, apart from the Euclidean case, the equations involved are transcendental, and can not be solved, in general, using elementary methods.

\item We have also employed Wald curvature as a malleable tool in conjunction with Ricci curvature in a somewhat more theoretical context in \cite{Sa11}.  On a more abstract note, we should remark that, given its (metric) intrinsic nature, $K_W$ ``behaves well'', so to speak, under Gromov-Hausdorff convergence (see \cite{BBI}, \cite{Gr-carte} and \cite{Sa04}, \cite{SA} for some applications in Graphics, Imaging, etc.). Moreover since it (or, rather a somewhat modified version of it $K_{W'}$ identifies with Rinow curvature (see \cite{Bl}, \cite{BM}), it allows us to view the whole problem of defining and computing Ricci for $PL$ (polyhedral) manifolds, (and in particular its applications in Graphics, Regge calculus, etc.) in the larger context of Alexandrov spaces (see, e.g. \cite{BBI}, \cite{Gr-carte}).
\end{enumerate}
\end{rem}

\begin{rem}
Obviously, one can use the same method as above to compute the Ricci curvature (of $\mathcal{T}^\ast$), according to Stone's original approach for determining a directions in cell complexes.
\end{rem}

To return to the main problem of this section:
From the definitions and results above we obtain -- first discretely, at finite scale bounded away from zero -- then passing to the limit) the following result connecting between the Ricci curvatures of a simplicial (polyhedral) complex and its dual:

\begin{thm} \label{ref:RicT,RicT*}
Let $\mathcal{T}, \mathcal{T^\ast}$ be as above. Then
\begin{equation}
\lim_{{\rm mesh}(\mathcal{T}) \rightarrow 0}{\rm Ric}(\sigma) = \lim_{{\rm mesh}(\mathcal{T}^\ast) \rightarrow 0}C\cdot {\rm Ric}^*(\sigma^\ast)\,,
\end{equation}
where $\sigma \in \mathcal{T}$ and where  $\sigma^\ast \in \mathcal{T}^\ast$ is (as suggested by the notation) the dual of $\sigma$.%???????????????
\end{thm}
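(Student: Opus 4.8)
The plan is to reduce the statement to the Gromov--Hausdorff convergence already recorded in Lemma \ref{ref:T,T*}, together with the stability of the Wald curvature $K_W$ under such convergence. The guiding idea is that, as the mesh tends to zero, both $\mathcal{T}$ and $\mathcal{T}^\ast$ converge to one and the same limit space --- the underlying smooth (or, in the piecewise flat setting, the limiting metric) manifold $M$ --- so that the metric Ricci curvatures computed on the two complexes are forced to agree in the limit, up to the normalization constant $C$ that reconciles the two summation conventions (the index running from $1$ on the dual, as in (\ref{eq:RicciCell}), versus the convention used on $\mathcal{T}$).

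First I would fix an $n$-simplex $\sigma \in \mathcal{T}$ and its dual vertex $\sigma^\ast \in \mathcal{T}^\ast$, and isolate the $n$ two-dimensional cells $\mathfrak{c}_1,\ldots,\mathfrak{c}_n$ entering the dual definition (\ref{eq:RicciCell}), together with the matching two-sections on the primal side. Thickness is essential here: by the Lemma that the dual of a thick complex is thick, the cells $\mathfrak{c}_i$ do not degenerate as the mesh shrinks, so their diameters stay comparable to the mesh and the metric quadruples extracted from their vertex sets remain nondegenerate (their Cayley--Menger determinant (\ref{eq:CayleyMenger-nD}) stays bounded away from the degenerate locus). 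This uniform nondegeneracy is exactly what licenses passing to the limit in the transcendental relations (\ref{eq:k(Q)}).

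The core step is to show that, for each $i$, the embedding curvature $K_W(\mathfrak{c}_i)$ converges to the smooth sectional curvature $K(\pi_i)$ of the limiting $2$-plane $\pi_i$ at the common limit point $p = \lim \sigma = \lim \sigma^\ast$, and that the same holds for the matching primal two-sections. This rests on the fact that $K_W$ is continuous under Gromov--Hausdorff convergence, combined with the observation --- recorded in the Remark following Definition \ref{def:WBcurv} --- that the Wald--Berestovskii curvature of a family of nondegenerate metric quadruples of diameter tending to zero recovers the Gauss curvature of the tangent $2$-plane. Summing over $i = 1,\ldots,n$ then forces $\mathrm{Ric}^\ast(\sigma^\ast)$ and $\mathrm{Ric}(\sigma)$ to converge to the same collection of limiting sectional curvatures, whence the two limits can differ only by the fixed combinatorial factor $C$ recording the discrepancy between the two averaging schemes --- precisely the dimension $n$ versus $n-1$ mismatch already flagged in Theorem \ref{thm:ComparisonThm} and Remark \ref{rem:ComparisonThm}.

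The main obstacle I expect is twofold. First, one must establish the convergence $K_W(\mathfrak{c}_i) \to K(\pi_i)$ \emph{uniformly} over the relevant cells, since the sum in (\ref{eq:RicciCell}) couples $n$ of them; here thickness again does the heavy lifting, furnishing uniform two-sided bounds on edge lengths and dihedral angles and hence uniform error estimates, but the bookkeeping is delicate because the relations (\ref{eq:k(Q)}) are transcendental and admit no closed form away from the Euclidean case. Second, one must pin down the constant $C$ by explicitly matching, in the limit, the two-sections counted on $\mathcal{T}$ against the $n$ dual cells $\mathfrak{c}_i$ --- a purely combinatorial identification that fixes $C$ once and for all, independently of the mesh.
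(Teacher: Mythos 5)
Your overall strategy is the one the paper itself gestures at: the paper offers no detailed argument for Theorem \ref{ref:RicT,RicT*}, only the sentence that the result is obtained ``from the definitions and results above \dots first discretely, at finite scale bounded away from zero -- then passing to the limit,'' i.e.\ exactly the combination of Lemma \ref{ref:T,T*}, the thickness of $\mathcal{T}$ and $\mathcal{T}^\ast$, and the Gromov--Hausdorff stability of the intrinsic (Wald) curvature that you invoke. To that extent your reconstruction is faithful to the intended route.

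The genuine gap is in your treatment of the constant $C$. You assert that $C$ is fixed ``once and for all, independently of the mesh'' by a purely combinatorial matching of primal $2$-sections against the $n$ dual cells $\mathfrak{c}_i$. The paper explicitly denies that this can be done: the remark immediately following the theorem states that ``the precise constant $C$ is hard to determine,'' that thickness ``supplies us only with weak estimates,'' and that sharper control would require controlling ``the regularity of the combinatoric structure, as well.'' The obstruction is that the correspondence between the $n$ dual $2$-cells at $\sigma^\ast$ and the $2$-sections available on the primal side is not mesh-independent combinatorial data: vertex valences and local incidence structure vary from vertex to vertex and from one triangulation in the family to the next, so thickness yields only two-sided bounds on the ratio of the two curvature sums, not a single limiting factor. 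Unless one additionally assumes combinatorially (almost) regular triangulations --- which the theorem does not --- your step of ``pinning down $C$'' cannot be carried out, and what one actually obtains is the weaker, comparison-type statement the paper settles for. A secondary issue: your core convergence step $K_W(\mathfrak{c}_i) \to K(\pi_i)$ presupposes a smooth limiting $2$-plane at a limit point, i.e.\ a smooth ambient manifold; that hypothesis belongs to the subsequent approximation theorem for smooth $M^n$, not to Theorem \ref{ref:RicT,RicT*}, which relates a complex to its dual without reference to a smooth limit.
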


\begin{rem}
This result is, admittedly,  somewhat vague. % to be sure.
However, to our defense, we can only underline the fact that the precise constant $C$ is hard to determine. The thickness condition, that ensures a metric ``quasi-regularity'' of the triangulation, supplies us only with weak estimates. To obtain stronger ones, one should be able to control the regularity of the combinatoric structure, as well. (This is evident, but it will become even clearer in the sequel.) It should be noted in this context that, at least in Graphics, mesh improvement techniques allow us to consider such ``combinatorial almost regular'' triangulations.
\end{rem}

%%%%%%%%%%%% PINA AICI %%%%%%%%%%%%%%%%%%

From Lemma \ref{ref:T,T*}, the fact that ${\rm Ric}(v)$ is defined in a purely metric, intrinsic manner and from the fact that intrinsic properties are preserved under Gromov-Hausdorff limits (see \cite{Gr-carte}) and from Theorem \ref{ref:RicT,RicT*} above, we easily  obtain:

\begin{thm}
Let $M^n$ be a (smooth) Riemannian manifold and let $\mathcal{T}$ be a thick triangulation of $M^n$. Then
\begin{equation}
{\rm Ric}_\mathcal{T} \rightarrow C_1\cdot{\rm Ric}_{M^n},\; {\rm as}\; {\rm mesh}(\mathcal{T}) \rightarrow 0 \,,
\end{equation}
where the convergence is the weak convergence (of measures). % in the weak sense?
\end{thm}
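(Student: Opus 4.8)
The plan is to reduce the assertion to two ingredients: a \emph{cellwise} convergence of the metric sectional curvatures to the smooth ones, and an \emph{upgrade} of this pointwise statement to weak convergence of the associated measures. Both ${\rm Ric}_\mathcal{T}$ and ${\rm Ric}_{M^n}$ are naturally viewed as measures on $M^n$: the former as the purely atomic measure $\mu_\mathcal{T} = \sum_{v} {\rm Ric}_\mathcal{T}(v)\,A(v)\,\delta_v$, where the sum runs over the vertices $v$ of $\mathcal{T}^\ast$, $A(v)$ is a volume weight associated to $v$ (comparable, by thickness, to the volume of a mesh-sized cell), and $\delta_v$ is the unit mass at $v$; the latter as the absolutely continuous measure ${\rm Ric}_{M^n}\,dV$, with $dV$ the Riemannian volume element. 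The goal is then to show $\int \phi\,d\mu_\mathcal{T} \to C_1\int \phi\,{\rm Ric}_{M^n}\,dV$ for every continuous, compactly supported test function $\phi$.

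First I would establish the cellwise convergence. The essential classical input is the Wald--Berestovskii theorem: on a smooth surface the embedding curvature $\kappa(Q)$ of a nondegenerate metric quadruple $Q$, computed from \eqref{eq:k(Q)}, tends to the Gauss curvature at the point as the diameter of $Q$ shrinks to $0$. Applying this cell by cell, as ${\rm mesh}(\mathcal{T}) \to 0$ each of the $n$ two-cells $\mathfrak{c}_i$ incident to the fixed direction $e_1$ at $v$ collapses onto a well-defined tangent $2$-plane spanned by $e_1$ and $e_i$, and its Wald curvature satisfies $K_W(\mathfrak{c}_i) \to K(e_1,e_i)$, the smooth sectional curvature of that plane. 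Summing over the $n$ incident cells as in \eqref{eq:RicciCell} and comparing with the smooth expression ${\rm Ric}(e_1) = \sum_{i=2}^{n} K(e_1,e_i)$ yields ${\rm Ric}_\mathcal{T}(v) \to C_1\,{\rm Ric}_{M^n}(v)$; the constant $C_1$ absorbs the combinatorial discrepancy already flagged in Remark \ref{rem:ComparisonThm} (the dual complex contributes $n$ rather than $n-1$ planar sections) together with any averaging factor. It is precisely here that \emph{thickness} is indispensable: by the definition \eqref{eq:fat-Munkres} it prevents any collapse of the cells, so the limiting $2$-planes are nondegenerate, the quadruples entering \eqref{eq:k(Q)} stay uniformly nondegenerate, and the convergence is \emph{uniform} in $v$.

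With uniform cellwise convergence in hand, the passage to weak convergence is a Riemann-sum argument. For a test function $\phi$ one writes $\int \phi\,d\mu_\mathcal{T} = \sum_v \phi(v)\,{\rm Ric}_\mathcal{T}(v)\,A(v)$; the weights $A(v)$, controlled from below and above in terms of ${\rm mesh}(\mathcal{T})$ thanks to thickness, make this a genuine Riemann sum for $\int \phi\,{\rm Ric}_{M^n}\,dV$. Its convergence follows by combining the uniform convergence ${\rm Ric}_\mathcal{T}(v) \to C_1\,{\rm Ric}_{M^n}(v)$ just obtained, the continuity of $\phi$, and the Gromov--Hausdorff convergence of the meshes $\mathcal{T}$ and $\mathcal{T}^\ast$ guaranteed by Lemma \ref{ref:T,T*}; the intrinsic, metric nature of $K_W$ ensures it is preserved under these Gromov--Hausdorff limits, and Theorem \ref{ref:RicT,RicT*} reconciles the curvatures computed on $\mathcal{T}$ and on $\mathcal{T}^\ast$.

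The main obstacle I anticipate is \emph{quantitative}: making the cellwise convergence uniform, with an error controlled solely by the mesh, and simultaneously pinning down the weight $A(v)$ so that the normalizing constant $C_1$ is unambiguous. As the author already concedes for the related constant $C$ in Theorem \ref{ref:RicT,RicT*}, thickness supplies only a one-sided quasi-regularity and hence only \emph{weak} geometric estimates; extracting uniform bounds on $K_W(\mathfrak{c}_i) - K(e_1,e_i)$ from the transcendental equations \eqref{eq:k(Q)}, rather than mere pointwise limits, is the delicate analytic core. Once these uniform estimates are available, however, the Riemann-sum convergence---and with it the asserted weak convergence of measures---follows in a routine manner.
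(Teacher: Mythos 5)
Your proposal is correct in outline and in fact supplies considerably more of the argument than the paper does. The paper's entire proof is a single sentence: the theorem is said to follow ``easily'' from Lemma \ref{ref:T,T*} (the common Gromov--Hausdorff limit of $\mathcal{T}$ and $\mathcal{T}^\ast$), the purely metric, intrinsic character of ${\rm Ric}(v)$, the preservation of intrinsic properties under Gromov--Hausdorff limits, and Theorem \ref{ref:RicT,RicT*}. You take a genuinely different --- and more explicit --- route: you identify the analytic engine, namely the Wald--Berestovskii fact that the embedding curvature of a shrinking nondegenerate quadruple converges to the Gauss curvature of the limiting $2$-section, sum over the $n$ incident cells as in (\ref{eq:RicciCell}) to obtain cellwise convergence of the discrete Ricci curvature, and then upgrade to weak convergence of measures by a Riemann-sum argument with thickness-controlled volume weights. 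The paper never makes the measures explicit and never performs the Riemann-sum step, so your version is the more complete one: the paper's citation-based argument buys brevity and a clean conceptual reason (intrinsicness plus Gromov--Hausdorff stability), while yours buys an actual mechanism and a visible location for the constant $C_1$. The one caveat --- which you flag yourself, and which the paper also concedes in the remarks following Theorem \ref{ref:RicT,RicT*} and the present theorem --- is that the uniformity of the cellwise convergence and the identification of the weights $A(v)$ (hence of $C_1$) are not actually established by thickness alone; that quantitative gap is shared by both arguments and is precisely why the paper leaves $C$ and $C_1$ undetermined.
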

For related results, see \cite{cms}, \cite{Sa11a}  for the Lipschitz-Killing curvatures, \cite{BK}, \cite{Sa11}  for discrete (combinatorial, respective metric) Gaussian curvature, and \cite{Bernig2}, for the Einstein measures.

\begin{rem}
While the desired constant $C_1$ is, of course, $C_1 = 1$, and some first experimental results hint that, at least for certain ``nice'' triangulations, this is indeed the case,
 we can't guarantee a better result -- see the remark following the preceding theorem.
\end{rem}

%
%\begin{rem}
%We have adopted the approach to the computation of Ricci curvature for simplicial complexes detailed above due to its geometric, intuitive simplicity. However, it is natural to ask the question whether it is possible to compute the Lipschitz-Killing curvatures in general -- and of Ricci curvature in particular -- starting from the defining Formula (\ref{eq:Rj-smooth}), that is if one can compute the necessary curvature 2-forms and connection 1-forms. The answer seems to be positive, even though, till recently this was only a mainly theoretical possibility (see \cite{So}). However, after the appearance of the computational exterior differential calculus, introduced by Gu \cite{Gu} and Gu and Yau \cite{GY}, and embraced and developed since then by many others, this approach appears quite feasible.
%\end{rem}

\begin{rem}
While we have adopted the Wald curvature as the metric curvature for surfaces\footnote{and the Finsler-Haantjes one as a metric alternative for computation of principal curvature} of our choice, for reasons detailed above, it would be interesting to explore the capabilities  -- both theoretical and practical -- as far as $PL$ Differential Geometry is concerned, of other metric curvatures (see \cite{Sa11a} and the bibliography therein)  %\cite{LW1}, \cite{LW2},
and in particular of the {\it Menger curvature measure}:
\begin{equation}
\mu(\mathcal{T}) = \mu_p(\mathcal{T}) = \sum_{T \in \mathcal{T}}\kappa_M^p(T)({\rm diam}\,T)^2\,,
\end{equation}
for some $p \geq 1$, where $\kappa_M$ denotes the Menger curvature (of the simplex $T$).
\end{rem}

%-------------------------------------------------------------------------------------

\section{The Bonnet-Myers Theorem}
Having introduced a metric Ricci curvature for $PL$ manifolds, one naturally wishes to verify that this represents, indeed, a proper notion of Ricci curvature, and not just an approximation of the classical notion. According to the synthetic approach to Differential Geometry (see, e.g.  \cite{Gr-carte}, \cite{Vi}), a proper notion of Ricci curvature should satisfy adapted versions of the main, essential theorems that hold for the classical notions. Amongst such theorems the first and foremost is Myers' Theorem (see, e.g., \cite{Be}). And, indeed, fitting versions for combinatorial cell complexes and weighted cell complexes were proven, respectively, by Stone \cite{St1}, \cite{St2}, and Forman \cite{Fo}. Moreover, the Bonnet part of the Bonnet-Myers theorem, that is the one appertaining to the sectional curvature, was also proven for $PL$ manifolds, again by Stone -- see \cite{St3}, \cite{St0}.

For the special -- yet of main importance in applications (see \cite{CL}, \cite{GY}, \cite{Sa11}) -- case of 2-dimensional manifolds, such a result is easy to prove, given the fact that Ricci and sectional curvature essentially coincide. More precisely, we can formulate the following theorem:

\begin{thm}[Bonnet-Myers for $PL$ 2-manifolds -- Combinatorial] \label{thm:BM-Comb}
Let $M^2_{PL}$ be a complete, closed 2-dimensional $PL$ manifold of without boundary, such that

(i) There exists $d_0 > 0$, such that ${\rm mesh}(M^2_{PL}) \leq d_0$\footnote{Here ${\rm mesh}(M^2_{PL})$ denotes the mesh of the 1-skeleton of $M^2_{PL}$, i.e. the supremum of the edge lengths.}

(ii) $K_{Comb}(M^2_{PL}) \geq K_0 > 0$.

Then $M^2_{PL}$ is compact and, moreover
\begin{equation} \label{eq:estimate}
{\rm diam}(M^2_{PL}) \leq \left\{
\begin{array}{ll}
2\pi d_0, & k_0 \geq (2 - \sqrt{2})\pi\,;\\
4\pi^3d_0/[(2\pi - d_0)(4\pi k_0 - k_0^2)^{1/2}], & {\rm else}\,;
\end{array}
\right.
\end{equation}
where $K_{Comb}$ denotes the combinatorial Gauss curvature of $M^2_{PL}$,
\begin{equation}
K_{Comb}(p) = 2\pi - \sum_{i=1}^{m_p}\alpha_i(p)\,
\end{equation}
where $\alpha_1,\ldots,\alpha_{m_p}$ are the (interior) face angles adjacent to the vertex $v_i$.
\end{thm}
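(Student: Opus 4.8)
The plan is to reduce the statement to a diameter bound for a single minimizing geodesic and then to derive that bound from a lower bound on the Wald curvature taken at the fixed scale of the mesh. First I would dispose of compactness: with its intrinsic piecewise-flat path metric, $M^2_{PL}$ is a complete, locally compact length space, so by the Hopf--Rinow theorem for length spaces it is proper and geodesic; hence once the diameter is shown to be finite, $M^2_{PL}$ is a closed metric ball of finite radius and therefore compact. It thus suffices to fix two points $p,q$ with $d(p,q)$ close to ${\rm diam}(M^2_{PL})$, to join them by a minimizing geodesic $\gamma$, and to bound the length $L(\gamma)$.

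The crux is to convert the combinatorial hypothesis into a \emph{metric} curvature bound. Since in dimension two the metric Ricci curvature of (\ref{eq:RicciCell}) is, up to the dual-complex normalization, the Wald curvature $K_W$, and since the classical comparison mechanism of Bonnet--Myers carries over to a length space curved $\geq \kappa_0 > 0$ in the sense of $K_W$ (yielding $L(\gamma) \le \pi/\sqrt{\kappa_0}$), the whole problem is to produce a positive lower bound $\kappa_0 = \kappa_0(K_0,d_0)$ for $K_W$. Here the two hypotheses must be used together: a vertex carrying a concentrated defect $K_{Comb}(v) \ge K_0$ has \emph{infinite} pointwise Wald curvature (a cone point), so the bound can only be extracted at a finite scale, and that scale is exactly the mesh $d_0$ of hypothesis (i). Concretely, I would model the star of each vertex $v$ as a flat cone of total angle $\omega = 2\pi - K_{Comb}(v) \le 2\pi - K_0$ and compute, via the embedding-curvature formula (\ref{eq:k(Q)}) and the Cayley--Menger determinant (\ref{eq:CayleyMenger-nD}), the curvature $\kappa(Q)$ of metric quadruples $Q$ of diameter comparable to $d_0$ that straddle $v$; minimizing over such quadruples, as in the definition of $K(\mathfrak c)$, gives the sought $\kappa_0$.

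The case distinction in (\ref{eq:estimate}) is then forced by the admissibility conditions in (\ref{eq:k(Q)}) for the positively curved gauge surface, namely $\sqrt{\kappa}\,d_{ij} \le \pi$ together with the nonnegativity of the order-$3$ principal minors. When the defect is large, $K_0 \ge (2-\sqrt2)\pi$, the spherical embedding is constrained by the cap $\sqrt\kappa\,d_{ij}\le\pi$ rather than by the defect, $\kappa_0$ saturates at the scale-determined value $1/(4d_0^2)$, and the comparison yields ${\rm diam} \le \pi/\sqrt{\kappa_0} = 2\pi d_0$. When the defect is smaller, $\kappa_0$ is instead the root of the transcendental embedding equation coming from (\ref{eq:k(Q)}); writing $\omega = 2\pi - K_0$ and noting that the relevant combination is $\sqrt{(2\pi)^2-\omega^2}=\sqrt{K_0(4\pi-K_0)}$, this root produces the second branch $4\pi^3 d_0/[(2\pi-d_0)(4\pi K_0 - K_0^2)^{1/2}]$. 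The threshold $(2-\sqrt2)\pi$ is precisely the value of $K_0$ at which the two branches exchange which constraint is binding, since there $K_0(4\pi-K_0)=2\pi^2$.

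I expect the main obstacle to be exactly this conversion step, for two reasons. First, because the pointwise Wald curvature at a cone point is infinite, one must justify carefully that evaluating $K_W$ at the finite mesh scale and minimizing $\kappa(Q)$ over all straddling quadruples yields a bound that is uniform along $\gamma$ and sharp enough to produce the stated constants; this is where the transcendental equation of (\ref{eq:k(Q)}) and its principal-minor side conditions must be controlled. Second, making the Bonnet--Myers comparison rigorous on a $PL$ surface requires handling the combinatorially irregular way in which $\gamma$ meets the stars of successive vertices — the mesh bound (i) guarantees that consecutive cone points are encountered within distance $\le d_0$, which is what lets the per-vertex focusing accumulate, while the thickness of $\mathcal T$ and of its dual, established in the Lemmas above, keeps these local cone models uniformly nondegenerate. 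The remaining steps, namely substituting $\kappa_0$ into $\pi/\sqrt{\kappa_0}$ and simplifying, are routine.
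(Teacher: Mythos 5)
Your proposal takes a route that is genuinely different from both of the paper's proofs, and it contains a gap at exactly the step you yourself identify as the crux. The paper's first proof is essentially a direct citation: in dimension two Stone's maximum and minimum curvatures $k_+$, $k_-$ coincide with the combinatorial defect, so the hypotheses of Theorem~3 of \cite{St3} are verified and the bound (\ref{eq:estimate}) is read off verbatim from Stone's discrete variational argument; the paper's second proof smooths $M^2_{PL}$, perturbs to a nearby surface of strictly positive Gauss curvature, and applies the classical Bonnet theorem (obtaining the different bound $\pi/\sqrt{K_0}$). You instead propose to convert the combinatorial hypothesis into a lower bound on the Wald curvature at mesh scale and then invoke a synthetic Bonnet--Myers comparison for spaces curved $\geq \kappa_0$ in the sense of $K_W$.

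The gap is that the conversion cannot produce the hypothesis that the comparison theorem needs. A piecewise flat surface with isolated cone points of positive defect has Wald--Berestovskii (equivalently Alexandrov) curvature $\geq 0$, not $\geq \kappa_0$ for any $\kappa_0 > 0$: a metric quadruple contained in the flat interior of a single triangle embeds isometrically in $\mathbb{R}^2$ and has embedding curvature exactly $0$, no matter how small the mesh is. The synthetic Bonnet--Myers mechanism (as in \cite{BGP}, Theorem 3.6) requires the curvature bound for \emph{all} quadruples in a region, whereas your plan only extracts positivity from quadruples that straddle a vertex; restricting the class of quadruples is not a hypothesis under which any comparison theorem you could cite operates, and you supply no substitute focusing argument (that substitute is precisely Stone's discrete second-variation machinery, which is what makes the mesh bound $d_0$ enter the constants). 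Relatedly, your reverse-engineering of the two branches of (\ref{eq:estimate}) as $\pi/\sqrt{\kappa_0}$ for a single $\kappa_0(K_0,d_0)$ does not cohere: at the threshold $k_0=(2-\sqrt{2})\pi$ the two expressions $2\pi d_0$ and $4\pi^3 d_0/[(2\pi-d_0)\sqrt{2}\,\pi]$ agree only when $d_0=(2-\sqrt{2})\pi$ as well, so the estimate is not of that form and the claimed derivation of the constants would not close. To repair the argument you would either have to carry out Stone's combinatorial geodesic analysis (the paper's Proof 1) or pass to a smoothing and use the classical theorem (the paper's Proof 2).
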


\begin{rem}
Condition (1), that ensures that the set of vertices (of the $PL$ manifold) is ``fairly dense''\footnote{in Stone's formulation (\cite{St0}, p. 1062).} is nothing but the necessary and quite common density condition for good approximation both of distances and of curvature measures -- see e.g. \cite{cms} and \cite{Sa11a} and the references therein. The mere existence of such a $d_0$ is evident for a compact manifold, however it can't be apriorily be supposed for a general manifold, hence has do be postulated. Moreover, to ensure a good approximation of curvature, this density factor has to be properly chosen (see, e.g. \cite{SAZ}), thus tighter estimates for the mesh of the triangulation can be obtained from (\ref{eq:estimate}) along with better curvature approximation. No less importantly, an adequate choice of the vertices of the triangulation, also ensures, via the thickness property, the non-degeneracy of the manifold (and of its curvature measures) -- see \cite{Sa11a}.
\end{rem}

\begin{prf1}
The theorem follows readily from Theorem 3 of \cite{St3}. Indeed, in the two dimensional case, the so called {\it maximum} and {\it minimum curvatures}, $k_+$, respective $k_-$ (see \cite{St3}, p. 12, for the precise definitions) at the vertices of $M^2_{PL}$ coincide with the combinatorial Gauss curvature.
Moreover, conditions (1) and (2) of Theorem 3 of \cite{St3} are, due to the fact that here we are concerned solely with 2-dimensional simplicial complexes ($PL$ manifolds), equivalent to our conditions (2) and (1) above, respectively.
Therefore, the conditions in the statement of Theorem 3, \cite{St3} are satisfied and, by (ii) of the said result, the theorem above follows immediately. % from (ii) of the said result
\end{prf1}

\begin{rem}
It is easy to see that the theorem above extends to more general polyhedral surfaces. Indeed, by their very definition
%(see  e.g. [Kalay], p. 346.),
%\marginpar{\tiny Trebuie ref.?!}
such surfaces admit simplicial subdivisions. %\footnote{and furthermore, such that all vertex links are combinatorial manifolds}
However, during this subdivision, $k_+$, respective $k_-$ do not change, since the only relevant contributions to these quantities occur at the vertices, and depend only on the angles at these vertices, more precisely on the {\it normal geometry} (see \cite{St3}, p. 12), that suffer no change during the subdivision process.
\end{rem}

\begin{rem}
The bound (\ref{eq:estimate}) is rather weak, as compared to the one for the classical case, but it is the only one supplied by Stone's result we made appeal to, namely Theorem 3 of \cite{St3}.
%{\tt The previous theorem for {\em metric} curvature??}
\end{rem}

The proof above suffers from the disadvantage of making use of Stone's maximum and minimum curvatures (even though, in this context making appeal to them is rather natural). We can, however, provide a different proof, independent of Stone's work, but at the price of using some heavy (albeit classical) machinery, that, moreover, takes us away, so to say, from the discrete methods. (On the other hand, smooth, analytical tools are far more familiar to a large research community in CAGD, Imaging, etc.)\\

\begin{prf2}
The basic idea (which we first employed in \cite{Sa11}) is to consider a {\it smoothing} $M^2$ of $M^2_{PL}$.
Since, by \cite{Mun}, Theorem 4.8, smoothings
% are $\delta$-approximations, and therefore, for $\delta$ small enough, also $\varepsilon$-approximations of the given piecewise-linear surface $M^2_{PL}$, they
approximate arbitrarily well both distances and angles\footnote{More precisely, they are $\delta$-approximation and, for $\delta$ small enough, also $\varepsilon$-approximations of $M^2_{PL}$ -- for details see \cite{Mun}, or, just for the minimal required facts, the Appendix of \cite{Sa11}.} on $M^2_{PL}$, defects are also arbitrarily well approximated. Given that the combinatorial curvature of $M^2_{PL}$ is bounded from below, it follows that so will be the sectional (i.e. Gauss) curvature of $M^2$.

%Now, since by the classical Bonnet-Myers Theorem, the smooth surface $M^2$ is compact, it follows (by the same smoothing/approximation arguments(reasoning?) as above) that so is $M^2_{PL}$.

Unfortunately, the Gaussian curvature of $M^2$ is positive only on isolated points (the set of vertices of $M^2_{PL}$), so we can not apply the classical Bonnet theorem yet. However, we can ensure that $M^2$ is arbitrarily close to a smooth surface $M^2_+$, having curvature Gaussian curvature $K(M^2_+) > 0$.\footnote{This is easily seen by adding spherical ``roofs'' (of low curvature) over the faces, and then slightly modifying the construction, to ensure that the curvature will be positive also on the ``sutures'' of the said roofs, corresponding to the edges of the original $PL$ manifold.}
%\footnote{yet almost flat??}.
%This can shown, for instance, using Alexandrov result(s) (see, e.g. \cite{BZ})  asserting that $M^2_{PL}$ admits a convex realization (embedded) in $\mathbb{R}^3$ and then deforming slightly the 2-faces in an $\varepsilon'$-neighbourhood of $M^2_{PL}$ to obtain a smooth, strictly convex surface of strictly positive Gauss curvature.
%(Alternatively, one can again use a result of Alexandrov (again, see \cite{BZ}), namely that -- ...)
Therefore, the classical Bonnet Theorem can be applied for $M^2_+$, hence $M^2_{PL}$ is compact and its diameter has the same upper bound (again using the same arguments as before\footnote{i.e. $\delta$- and $\varepsilon$-approximations}) as that of $M^2_+$ (and $M^2$), namely
\begin{equation} \label{eq:BM}
{\rm diam}(M^2_{PL}) \leq \frac{\pi}{\sqrt{K_0}}\;.
\end{equation}
\end{prf2}

\begin{rem}
Apparently, the bound for diameter given by the proof above, is tighter than the one obtained by Stone in \cite{St3}. However, we should keep in mind that, in practice, one is more likely to encounter $PL$ surfaces as approximations of smooth ones.\footnote{and, obviously, $PL$ surfaces are $PL$ approximations of their own smoothings} However, the larger the mesh of the approximating surface (i.e. the ``rougher'' the approximation), the larger the deviation of the approximating triangles from the tangent planes (at the vertices), hence the more likely is to obtain large combinatorial curvature. Hence, there is a correlation between size of the simplices and curvature, more precisely, the lower bounds in (\ref{eq:estimate}), the lower ones in (\ref{eq:BM}).
\end{rem}

Since the leitmotif of the previous section was metric (Wald) curvature, it is natural to ask whether a fitting version of the Bonnet-Meyers Theorem exists for this type of curvature? The answer is -- at least in dimension 2 -- positive: we can, indeed state an analogue of Meyers' Theorem, in terms of the Wald curvature:

\begin{thm}[Bonnet-Meyers for $PL$ 2-manifolds -- Metric] \label{thm:BM-Metric}
Let $M^2_{PL}$ be a complete, 2-dimensional $PL$ manifold without boundary, such that

(i') There exists $d_0 > 0$, such that ${\rm mesh}(M^2_{PL}) \leq d_0$;
%where ${\rm mesh}(M^2_{PL})$ denotes the mesh of the 1-skeleton of $M^2_{PL}$;\marginpar{\tiny trebuie explicatia?}

(ii') $K_W(M^2_{PL}) \geq K_0 > 0$.

Then $M^2_{PL}$ is compact and, moreover
%
%\begin{equation} \label{eq:estimate1}
%{\rm diam}(M^2_{PL}) \leq D = D(K_0,d_0).
%\end{equation}
\begin{equation} %\label{eq:BM}
{\rm diam}(M^2_{PL}) \leq \frac{\pi}{\sqrt{K_0}}\;.
\end{equation}
%\[{\rm diam}(M^2_{PL}) \leq \left\{
%\begin{array}{ll}
%2\pi d_0, & k_0 \geq (2 - \sqrt{2})\pi\,;\\
%4\pi^3d_0/[(2\pi - d_0)(4\pi k_0 - k_0^2)^{1/2}], & {\rm else}\,.
%\end{array}
%\right.\]
\end{thm}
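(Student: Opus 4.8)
The plan is to follow the strategy of Proof 2 of Theorem~\ref{thm:BM-Comb}, the essential new ingredient being that on a \emph{smooth} surface the Wald curvature is nothing but the classical Gaussian curvature. This is precisely the content of the remark following Definition~\ref{def:WBcurv}: at an accumulation point the Wald--Berestovskii curvature, read off as the limit of the embedding curvatures of nondegenerate regions shrinking to the point, recovers the Gauss curvature. First, then, I would pass to a smoothing $M^2$ of $M^2_{PL}$. By \cite{Mun}, Theorem~4.8, the smoothing can be taken to be a $\delta$- and $\varepsilon$-approximation of $M^2_{PL}$, so that intrinsic distances and angles are reproduced with arbitrary accuracy; since $K_W$ is by construction a purely metric quantity (it depends only on the mutual distances entering the Cayley--Menger determinant (\ref{eq:CayleyMenger-nD})), it is stable under such approximations. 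Combining these two facts transports the hypothesis $K_W(M^2_{PL}) \geq K_0$ into a lower bound for the Gaussian curvature of $M^2$.

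As in Proof 2, the curvature obtained this way is concentrated: it is positive only near the images of the vertices of $M^2_{PL}$, while the (images of the) faces stay essentially flat, so the classical Bonnet--Myers theorem---which requires $K \geq K_0 > 0$ at every point---is not yet applicable. This concentration also explains why a direct appeal to a synthetic, Alexandrov-type Bonnet--Myers theorem is blocked: the flat faces prevent the uniform lower curvature bound $\geq K_0$ that such a theorem would require, so the detour through a genuinely positively curved smooth surface seems unavoidable. I would therefore invoke the same auxiliary construction as in Proof 2, producing from $M^2$ a nearby smooth surface $M^2_+$ whose Gaussian curvature satisfies $K(M^2_+) \geq K_0$ everywhere (spherical ``roofs'' over the faces, with the sutures along the edges rounded so as to keep the curvature positive there as well), while $M^2_+$ remains a $\delta$- and $\varepsilon$-approximation of $M^2$, hence of $M^2_{PL}$.

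Once $K(M^2_+) \geq K_0 > 0$ holds pointwise, the classical Bonnet--Myers theorem gives ${\rm diam}(M^2_+) \leq \pi/\sqrt{K_0}$ together with compactness. Since $M^2_+$ approximates $M^2_{PL}$ arbitrarily well in distance, and since the bound $\pi/\sqrt{K_0}$ is independent of the fineness of the approximation, the diameter estimate descends to $M^2_{PL}$ in the limit; completeness together with the resulting finite diameter then yields compactness of $M^2_{PL}$ via Hopf--Rinow for (locally compact) length spaces.

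The step I expect to be the main obstacle is the passage from the concentrated curvature of $M^2$ to the \emph{uniform} pointwise bound $K \geq K_0$ demanded by Bonnet--Myers: one must check that the roof-and-suture modification can be carried out so as to keep the Gaussian curvature at least $K_0$ \emph{everywhere}, including over the formerly flat faces and along the creases, while simultaneously remaining $C^0$-small enough that the $\delta$- and $\varepsilon$-approximation of $M^2_{PL}$---and hence the sharp constant $\pi/\sqrt{K_0}$---is preserved. A second, more conceptual point requiring care is that the embedding curvature $K(\mathfrak{c})$ entering the hypothesis is computed from quadruples of cell vertices spread over a region of size $\sim d_0$, not from infinitesimal ones; consequently the identification of $K_W$ with the Gauss curvature of $M^2$ is only asymptotic and must be routed through the distance-approximation estimates of \cite{Mun} rather than through Wald's infinitesimal theorem applied directly to $M^2_{PL}$.
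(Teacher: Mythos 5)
Your proposal is correct and follows essentially the same route as the paper: smooth $M^2_{PL}$, use the stability of metric quadruples (hence of $K_W$) under the $\delta$-/$\varepsilon$-approximation of \cite{Mun} together with the Blumenthal--Menger identification of Wald and Gauss curvature on smooth surfaces to transfer the lower bound $K_0$ to the smoothing, and then reuse the roof construction and the classical Bonnet theorem exactly as in Proof 2 of Theorem \ref{thm:BM-Comb}. The only point at which you diverge from the paper is your aside that a synthetic Alexandrov-type argument is ``blocked'': the paper's remark immediately after the theorem states, on the contrary, that the result also follows from the Burago--Gromov--Perelman version of Myers' theorem for Alexandrov spaces of curvature $\geq K_0 > 0$, the present proof being offered only as a simpler, more intuitive alternative.
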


\begin{proof}
We employ again the basic argument first used in \cite{Sa11}:
Since distances (and angles) are arbitrarily well approximated by smoothings, it follows that so are metric quadruples (including their angles), hence so is Wald curvature. By \cite{Bl} (see also [BM70], Theorems
11.2 and 11.3), the Wald curvature at any point of non-trivial geometry $M^2$, namely at a vertex $v$,
$K_W(v)$ equals the classical (Gauss) curvature $K(v)$ (and, of course, this is also true a fortiori at all the other points, where both the smooth and the $PL$ manifold are flat). Therefore the Gauss curvature of $M^2$ approximates arbitrarily well %(in fact equals???)
the Wald curvature of $M^2_{PL}$, hence we can apply the same argument as in Proof 2 above to show that $M^2_{PL}$ is, indeed, compact and, furthermore, satisfies the upper bound (\ref{eq:BM}).
%
%
%Unfortunately, in the absence of the methods of \cite{St3}, related to the notion of curvature as defect, we can obtain, instead of (\ref{eq:estimate}), the even weaker (and very general) estimate (\ref{eq:estimate1}).
%\marginpar{\tiny \bf Nu pot mult mai bine de atit via smoothing?}
\end{proof}

\begin{rem}
Like the previous theorem, the result above can be extended to polyhedral manifolds, and even in a more direct fashion, since Wald curvature does not take into account the number of sides of the faces incident to a vertex, but only their lengths.
\end{rem}

\begin{rem}
This result, as well as its generalization to higher dimensions (see \ref{thm:BM-Metric+}) is hardly surprising, given the fact that, by \cite{BGP}, Theorem 3.6, Myers' theorem holds for general Alexandrov spaces of curvature $\geq K_0 > 0$, and since Wald-Berestovskii curvature is essentially equivalent to the Rinow curvature (see \cite{Bl}), hence to the Alexandrov curvature (see, e.g. \cite{Gr-carte}, Chapter 1). Rather, we give, in the special case of $PL$ surfaces (manifolds) a simpler, more intuitive %garb, an alternative
proof of the Burago-Gromov-Perelman extension of Meyers' Theorem.
\end{rem}

In higher dimension, none of the arguments applied in both proofs of Theorem \ref{thm:BM-Comb} are applicable, at least not without imposing further conditions:

\begin{itemize}

\item Regarding the first proof:

\begin{itemize}

\item In dimensions higher than 2, $k_+$ and $k_-$ do not, necessarily equal each other (see \cite{St3}, Example 4, p. 14)
 and, a fortiori, they fail to equal the combinatorial Gauss curvature. They do, however, according to Stone \cite{St3}, resemble in their behaviour the minimum, respective maximum sectional curvature at a point common to two 2-planes, that contain a given (fixed) tangent vector at the point in question.

 An important proviso should be added, however: While for the general $PL$ simplicial complexes, the equality between $k_+$ and $k_-$ fails to hold, it is true for the most relevant -- at least as far as our analysis is concerned -- case of $PL$ manifolds without boundary (see \cite{St3}, Example 3, p. 13).
Consequently, it is not clear how to connect our proposed metric discretization of Ricci curvature with the the maximal and minimal curvatures of Stone (hence to combinatorial curvature, whenever they equal it -- and each other).\footnote{A natural attempt would be to use straightforward extensions of $k_+$ and $k_-$ -- let's denote them, for convenience,
 ${\rm Ric}_{\min}$ and  ${\rm Ric}_{\max}$. However, it is not clear (at least at this point in time) how expressive  these definitions would prove to be.}

 \begin{rem}
 It is true that the lower bound on $k_+$, as considered in Theorem 3 of \cite{St3} has a simple expression, in any dimension, via a topological condition (cf. Lemma 5.1 of \cite{St3}), namely that the intersection of any ($PL$) geodesic segment of ends $p$ and $q$  with the 2-skeleton of $M^2_{PL}$ is precisely the set $\{p,q\}$ (with the exception, of course, of the case when the segment is contained in a simplex (of $M^2_{PL}$). However, since the metric information contained in this new condition is void (or rather thoroughly encrypted, so to say) it has no apparent advantage for application in conjunction with metric curvature.
 \end{rem}

\item For an application of the Stone's methods in conjunction with the metric curvature approach to any dimension, one would have to make appeal to Jacobi fields, as defined in \cite{St1}. However, as discussed in the previous section, this would probably led to numerical instability.

\end{itemize}

\item As far as the second proof is concerned:

\begin{itemize}

\item No smoothing of a $PL$ manifold necessarily exists in dimension higher than 4 and, even if it exists, it is not necessarily unique (starting from dimension 4)  -- see \cite{Mun1}.

    However, if such a smoothing exists, then the second proof of Theorem \ref{thm:BM-Comb} (and of Theorem \ref{thm:BM-Metric}) extends to any dimension, and we obtain the following $PL$ (metric) versions of the classical results:

    \begin{thm}[$PL$ Bonnet -- metric]%[Bonnet-Meyers for $PL$ manifolds(surfaces) -- Metric]
    \label{thm:BM-Metric+}
    Let $M^n_{PL}$ be a complete, $n$-dimensional $PL$, smoothable manifold without boundary, such that

(i') There exists $d_0 > 0$, such that ${\rm mesh}(M^n_{PL}) \leq d_0$;
%where ${\rm mesh}(M^2_{PL})$ denotes the mesh of the 1-skeleton of $M^2_{PL}$;\marginpar{\tiny trebuie explicatia?}

(ii') $K_W(M^n_{PL}) \geq K_0 > 0$\,,

where $K_W(M^n_{PL})$ denotes the sectional curvature of the ``combinatorial sections'' i.e. the cells $c_i$ (see Section 1 above).

Then $M^n_{PL}$ is compact and, moreover
%
%\begin{equation} \label{eq:estimate1}
%{\rm diam}(M^2_{PL}) \leq D = D(K_0,d_0).
%\end{equation}
\begin{equation} %\label{eq:BM}
{\rm diam}(M^2_{PL}) \leq \frac{\pi}{\sqrt{K_0}}\;.
\end{equation}
    \end{thm}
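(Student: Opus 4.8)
The plan is to carry out, now in arbitrary dimension, the strategy of Proof 2 of Theorem \ref{thm:BM-Comb} and Theorem \ref{thm:BM-Metric}, exploiting the standing hypothesis that $M^n_{PL}$ is smoothable. First I would fix a smoothing $M^n$ of $M^n_{PL}$, which exists precisely by this assumption; as noted earlier, smoothability is exactly what can fail in dimension $> 4$, so this is the point at which the hypothesis is indispensable. By the higher-dimensional analogue of \cite{Mun}, Theorem 4.8, such a smoothing is simultaneously a $\delta$- and (for $\delta$ small enough) an $\eps$-approximation of $M^n_{PL}$, so that distances and angles on $M^n$ approximate those on $M^n_{PL}$ arbitrarily well.

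The second step is the metric approximation chain already used in Theorem \ref{thm:BM-Metric}. Since distances and angles are well approximated, so is every metric quadruple lying in a fixed cell $\mathfrak{c}_i$, together with its Cayley--Menger determinant; by the continuous dependence of the embedding curvature $\kappa(Q)$ on the distances $d_{ij}$ through formula (\ref{eq:k(Q)}), the embedding curvature $K_W(\mathfrak{c}_i)$ of each combinatorial section is approximated arbitrarily well by the corresponding curvature of $M^n$. Invoking \cite{Bl} as before, at a point of non-trivial geometry the Wald curvature coincides with the classical sectional curvature; hence the sectional curvatures of $M^n$ along the $2$-planes spanned by the combinatorial sections are, up to an arbitrarily small error, bounded below by $K_0$.

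At this stage $M^n$ is still not admissible for the classical Bonnet theorem: its sectional curvatures are concentrated near the (images of the) skeleton and vanish on the flat regions, so they are not bounded below by a positive constant everywhere. As with the ``spherical roofs'' device of Proof 2, I would therefore perturb $M^n$ to a genuinely smooth manifold $M^n_+$ whose sectional curvatures are everywhere positive, bounded below by a constant arbitrarily close to $K_0$, and which is an arbitrarily good $\eps$-approximation of $M^n_{PL}$. The classical Bonnet theorem (\cite{Be}) then applies to $M^n_+$, giving compactness and ${\rm diam}(M^n_+) \leq \pi/\sqrt{K_0}$; transferring this bound back through the $\eps$-approximations, exactly as in Proof 2, yields that $M^n_{PL}$ is compact with ${\rm diam}(M^n_{PL}) \leq \pi/\sqrt{K_0}$.

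The hard part, and the genuine gap with the two-dimensional case, is the construction of $M^n_+$ with a \emph{uniform} lower bound on \emph{all} sectional curvatures. In dimension $2$ the single combinatorial section carries the full Gauss curvature, so a pointwise lower bound on $K_W$ is automatically a lower bound on the sectional curvature of the only $2$-plane present. For $n \geq 3$, however, hypothesis (ii') controls only the $n$ distinguished $2$-planes determined by the cells $\mathfrak{c}_i$, whereas classical Bonnet demands a lower bound over \emph{every} $2$-plane at \emph{every} point. Closing this gap --- either by arguing that thickness together with the density condition (i') forces the combinatorial sections to control every $2$-direction after smoothing, or by designing the roof perturbation so that the added positive curvature dominates in the remaining directions --- is where the main effort lies, and it is also the reason the statement is naturally phrased as a Bonnet (sectional) rather than a Myers (Ricci) result.
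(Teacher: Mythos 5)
You correctly identify the crux of the higher-dimensional case --- that the ``spherical roofs'' perturbation from Proof 2 of Theorem \ref{thm:BM-Comb} does not obviously yield a smooth $M^n_+$ with \emph{all} sectional curvatures uniformly bounded below, since hypothesis (ii') only controls the $n$ distinguished $2$-planes given by the cells $\mathfrak{c}_i$ --- but you then leave exactly that step open, so the proposal has a genuine gap at its load-bearing point. The paper concedes the same difficulty in almost the same words (``the rounding argument of Proof 2 \dots is not easy to extend directly -- if at all -- to higher dimension'') and closes it by a different device, taken from Stone \cite{St3}: one remetrizes $M^n_{PL}$ into a piecewise-spherical complex $M^n_{Sph,\rho}$ by centrally projecting each simplex $\sigma$ from the circumcenter $O(\sigma)$ of its circumscribed sphere of radius $R(\sigma)$ onto a sphere of sufficiently large radius $R^*(\sigma) \geq R(\sigma)$. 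For $R^*$ large the interiors of the cells acquire (small) positive curvature in every $2$-direction, the vertex curvatures only decrease under the projection and so stay positive, and --- crucially --- Lemma 5.5 of \cite{St3} supplies the quantitative distance comparison ${\rm dist}_{M^n_{PL}}(p,q) \leq C\,{\rm dist}_{M^n_{Sph,\rho}}(p^*,q^*)$, which is what allows the Bonnet diameter bound (obtained after smoothing $M^n_{Sph,\rho}$) to be transported back to $M^n_{PL}$ and compactness to be concluded.

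Your first two steps do match the paper: the smoothing exists by the smoothability hypothesis (via \cite{Mun}), metric quadruples and hence Wald curvature are approximated arbitrarily well, and the identification of Wald with sectional curvature at points of non-trivial geometry comes from \cite{Bl}, exactly as in Theorem \ref{thm:BM-Metric}. Where you diverge is the comparison object: you posit an everywhere-positively-curved smooth $M^n_+$ and transfer the bound back purely through $\varepsilon$-approximation, whereas the paper constructs a concrete piecewise-spherical intermediate and uses Stone's explicit distance inequality for the transfer. As written, your $M^n_+$ is asserted rather than constructed, and your own closing paragraph is an accurate diagnosis of why that assertion is the whole difficulty; to complete the argument along the paper's lines you would replace it with the $M^n_{Sph,\rho}$ construction and Lemma 5.5 of \cite{St3}.
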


%    \begin{thm}[$PL$ Meyers -- metric]
%
%    \end{thm}

    In all honesty, we should add that the ``rounding'' argument of Proof 2 of Theorem \ref{thm:BM-Comb} is not easy to extend directly -- if at all -- to higher dimension. Instead, a more subtle argument has to be devised. To this end we make appeal again to Stone's paper \cite{St3}, and we build the spherical simplicial complex $M^n_{Sph,\rho}$ associated to the given $PL$ (or rather piecewise-flat) complex $M^n_{PL}$. This is built as follows: Consider the sphere of radius $R = R(\sigma)$ and radius $O = O(\sigma)$, circumscribed to a given simplex $\sigma$, and its image $\sigma^* = \sigma^*(R^*)$ on a sphere of radius $R^* = R^*(\sigma), R^* \geq R$, via the central projection from $O$. Then we denote by   simplicial complex obtained by remetrization of $M^n_{PL}$ by the replacement of each $\sigma$ by its spherical counterpart $\sigma^*$. Then, by Lemma 5.5 of \cite{St3}, for large enough $R^* > R$, the following holds for any pair of points $p,q \in M^n_{PL}$:
     ${\rm dist}_{M^n_{PL}}(p,q) \leq C{\rm dist}_{M^n_{Sph,\rho}}(p^*,q^*)$, for a certain constant $C$, where $p^*,q^*$ denote the spherical images of $p,q$. Since the curvature at each vertex of the spherical simplex obtained by central projection of the simplices of $M^n_{PL}$ onto their circumscribed spheres is smaller than the corresponding one (at the same vertex) in the $PL$ (piecewise flat manifold), this holds a fortiori for $M^n_{Sph,\rho}$\,. It follows from the classical Bonnet theorem (after applying the necessary smoothing) that ${\rm diam}(M^n_{PL}) < {\rm diam}(M^n_{Sph,\rho})$.

\item On the other hand, if we approach the problem of $PL$ Ricci curvature from the viewpoint of the first part of the paper, that is of $PL$ approximations of smooth manifolds, then the situation changes dramatically. Indeed, even when such a smoothing $M^n$ ($n \geq 3$) exists, it is not probable that its sections provided in this manner by $M^n_{PL}$ suffice to approximate well enough -- let alone reconstruct -- the Ricci curvature of $M^n$. In simple words, ``there are not enough directions'' in $M^n_{PL}$ to allow us to infer from the metric curvatures of a $PL$ approximation, those of a given smooth manifold $M^n$ (in fact, not not even a good approximation), hence we are faced again with a problem that we already mentioned in conjunction with the first proof, namely that of insufficient ``sampling of directions'' in $PL$ approximations. (On the other hand, increasing of the number of directions, i.e. of 2-dimensional sections (simplices) generates a decrease of the the precision of the approximation, due to the (possible) loss of thickness of the triangulation -- a problem which we have discussed in some detail in \cite{Sa11a}.)

\begin{rem}
The considerations above show us that, unfortunately, no analogue in higher dimensions of the Myers' Theorem can be obtained by applying smoothing arguments). It is true that {\it a} Ricci curvature of the smooth manifold $M^n$ is obtained in terms of that of $M^n_{PL}$, however, it is not clear, in view of the paucity of sectional directions (i.e. possible 2-sections), how precisely is this connected to its discrete counterpart. Therefore, we can obtain, at best, an approximation result (with limits imposed by the thickness constraint -- see discussion above).
\end{rem}

\end{itemize}

\end{itemize}

We conclude with the following remarks: From the discussion above is transparent that, unfortunately, at this point in time, we can offer no proof for the general case, that is for non-smoothable $PL$ manifolds of dimension $n \geq 4$. To obtain such a proof for Bonnet's Theorem, one should adapt Stone's methods, as developed in \cite{St3}, while for a comprehensive generalization of Myers' theorem, one has the apparently more difficult task of adapting the purely combinatorial methods of \cite{St1} to the metric case. A quite different approach, but one that would allow us to extend the metric approach to quite general weighted $CW$ complexes, would be to adapt Forman's methods developed in \cite{Fo} to our case. The essential step in this direction would be to find relevant geometric content (e.g. lengths, area, volume) for Forman's ``standard weights'' associated to each cell.
%We defer such investigation for further study.

%-------------------------------------------------------------------------------------

%%%%%%%%%%%%%%%%%%%%%%%%%%%%%%%%%%%%%%%%%%%%%%%%%%%%%%%%

\end{document}